\documentclass[11pt,reqno]{amsart}
\usepackage[a4paper,textwidth=154mm,hcentering,top=28mm,bottom=28mm]{geometry}
\usepackage{amsmath,amssymb,amsthm,booktabs,hyperref,needspace}
\hypersetup{colorlinks=true,urlcolor=blue,linkcolor=blue,citecolor=blue,filecolor=blue,pdfborder={0 0 0}}
\numberwithin{equation}{section}
\newtheorem{theorem}{Theorem}[section]
\newtheorem{lemma}[theorem]{Lemma}
\newtheorem{proposition}[theorem]{Proposition}

\theoremstyle{definition}

\newtheorem{problem}{Problem}
\theoremstyle{remark}

\title[Optimal card guessing under nonincreasing valley weights]{Optimal complete-feedback card guessing under nonincreasing valley weights: A proof of the Diaconis--Fulman--Holmes shelf-shuffling conjecture}
\author{Congyi Luo}
\address{School of Data Science, Fudan University, Shanghai 200433, China}
\email{cyluo24@m.fudan.edu.cn}
\keywords{Shelf shuffling, complete feedback, optimal card guessing, permutation valleys}
\date{}
\begin{document}
\begin{abstract}
Let a random permutation $W\in S_n$ represent the order of $n$ distinct cards. After each guess, the player learns the actual card, and the objective is to maximize the expected total number of correct guesses. Diaconis, Fulman, and Holmes conjectured that, after a single uniform, unbiased shelf shuffle, the direction-tracking strategy is optimal: first guess the smallest card, and thereafter choose the nearest remaining card in the direction of the most recent rise or fall. We prove a more general theorem: if
\[
\mathbb P(W=w)=\frac{f(v(w))}{\sum_{\sigma\in S_n}f(v(\sigma))},
\]
where $v(w)$ is the number of interior valleys of the permutation, $f$ is nonnegative and nonincreasing, and the denominator is positive, then direction tracking maximizes the conditional probability of correctly guessing the next card after every history of positive probability. It therefore maximizes the expected total number of correct guesses among all complete-feedback strategies. Here an interior valley is a position $2\le i\le n-1$ satisfying $w_{i-1}>w_i<w_{i+1}$. Combined with the known shelf-shuffling probability formula, this theorem proves the conjecture for every deck size and number of shelves. The key to the proof is a swap of adjacent remaining values that preserves the revealed prefix: starting from the preferred candidate, the swap leaves the valley count unchanged or increases it by one, thereby turning monotonicity of the weights into a conditional probability ordering after every history.
\end{abstract}
\maketitle
\section{Introduction}
The basic question in complete-feedback card guessing is this: given a distribution on deck orders, how should one guess to maximize the expected total number of correct guesses when the actual card is revealed after every guess? Diaconis and Graham~\cite[Section 2]{DG} studied strategies and rewards for this problem with a uniformly shuffled deck, allowing repeated card values. For a uniform random permutation of distinct cards, all remaining cards are equally likely at every round. For a permutation obtained by shuffling an ordered deck once, the revealed cards also carry information about the order of those that remain. We ask which properties of a permutation distribution guarantee the optimality of a simple complete-feedback strategy, taking a shelf-shuffling conjecture as our starting point.

Diaconis, Fulman, and Holmes~\cite[Section 5.1]{DFH} proposed the direction-tracking strategy: first guess the smallest card, and then guess the nearest remaining card in the direction of the most recently observed rise or fall. They conjectured that this rule is optimal under complete feedback for uniform, unbiased shelf shuffling. The increasing and decreasing orders retained within shelves suggest why the strategy might work. However, the player does not know the shelf boundaries, and an observed rise or fall does not uniquely determine the shelf from which the next card will come. The proof must compare all remaining candidates \emph{after each given history}, rather than merely compare their unconditional probabilities.

Our main result is an optimality theorem for complete-feedback card guessing under nonincreasing valley weights (Theorem~\ref{thm:general}). An interior position is a valley if its value is smaller than both neighboring values. Whenever the probability of an individual permutation depends only on its valley count and is nonincreasing in that count, direction tracking selects a remaining card of largest conditional probability after every history of positive probability. It therefore maximizes the expected total number of correct guesses among all complete-feedback strategies, including those using independent auxiliary randomness. This gives a sufficient condition for optimality in other permutation models: it suffices to establish the form and monotonicity of the permutation weights, without computing the conditional distribution after every history.

The key to the general theorem is to turn an unconditional comparison of permutation weights into a comparison of candidates after a fixed history. We swap two adjacent values in the remaining set, preserving the entire revealed prefix, and show that, starting from the preferred candidate, the swap leaves the valley count unchanged or increases it by one. Nonincreasing weights then give a comparison for each permutation. Summing over completions yields the conditional probability ordering after every history. Sections~\ref{sec:swap}--\ref{sec:optimality} develop this argument without using the particular form of any shuffling probability formula.

Diaconis, Fulman, and Holmes~\cite[Theorem 3.1 and Corollary 3.3]{DFH} proved that the probability of an individual shelf-shuffling output depends only on its valley count and is nonincreasing in that count. The general theorem therefore proves their conjecture for every positive integer deck size and number of shelves. We first define the shuffling model, the strategy, and the competing strategies, and state this shelf-shuffling result in Theorem~\ref{thm:shelf}. We then give the general theorem and its proof, before completing the application to shelf shuffling in Section~\ref{sec:application}.
\subsection{The model, the strategy, and the main theorem}\label{sec:model}
Shelf shuffling preserves part of the pattern of rises and falls in the original deck. The direction-tracking strategy uses the most recent rise or fall to choose the next card. We specify this rule for every history and state its optimality. Throughout, optimality means maximizing the expected total number of correct guesses, not making the most correct guesses for every realized deck order.

Let $n,m$ be positive integers, and arrange $n$ distinct cards in the order $1,\ldots,n$ from top to bottom. The machine successively removes the bottom card, thus processing the cards in the order $n,n-1,\ldots,1$. Each card independently chooses one of the $m$ shelves uniformly and is then independently placed on the top or bottom of that shelf's pile, with probability $1/2$ each. Finally, the piles are concatenated in a fixed shelf order to give the output permutation $W=(W_1,\ldots,W_n)$, read from top to bottom. This is the shelf-shuffling model of~\cite[Section 3.1]{DFH}.

The player does not observe the shuffle. In round $j$, the player guesses $W_j$ using the actual cards previously revealed, and then sees $W_j$ whether or not the guess was correct. This information regime is called \emph{complete feedback}. Write $[n]=\{1,\ldots,n\}$. After a history $h=(w_1,\ldots,w_k)$ of length $k<n$, the set of remaining cards is
\[
 R=[n]\setminus\{w_1,\ldots,w_k\}.
\]
For the empty history, take $R=[n]$. If $k\ge1$, let $L=w_k$ be the last revealed card and partition the remaining cards into two sides:
\[
 R_- =\{r\in R:r<L\},\qquad R_+=\{r\in R:r>L\}.
\]
When $k=1$, the current direction is defined to be upward. When $k\ge2$, it is upward if $w_{k-1}<w_k$ and downward if $w_{k-1}>w_k$.

Write $G(h)$ for the strategy's choice in round $k+1$. For the empty history, $G(\varnothing)=1$. For a nonempty history, the direction-tracking rule is
\begin{equation}\label{eq:strategy}
G(h)=
\begin{cases}
\min R_+, & \text{upward, }R_+\ne\varnothing,\\
\max R_-, & \text{upward, }R_+=\varnothing,\\
\max R_-, & \text{downward, }R_-\ne\varnothing,\\
\min R_+, & \text{downward, }R_-=\varnothing.
\end{cases}
\end{equation}
Since $R\ne\varnothing$, each extremum used here is defined. The rule selects the nearest remaining card in the current direction, turning to the other side if no card remains in that direction. Only the actual revealed cards update the direction; whether a guess was correct plays no part in this update. Formula~\eqref{eq:strategy} makes the endpoint convention in the original description explicit.

Let $A_j$ be the guess made by strategy $A$ in round $j$, and define the total number of correct guesses by
\[
 C_A=\sum_{j=1}^n\mathbf1_{\{A_j=W_j\}},
\]
where $\mathbf1_E$ denotes the indicator of an event $E$. Let $\mathcal A_n$ be the class of all admissible complete-feedback strategies, including deterministic strategies and randomized strategies using auxiliary randomness independent of $W$. A card that has been guessed may be guessed again. A card that has been \emph{revealed}, however, cannot reappear, so guessing it has success probability zero.

The next theorem states both the original conjecture and the stronger conclusion that holds after every history. Write $\mathbb P_{n,m}$ and $\mathbb E_{n,m}$ for probability and expectation in the model with $n$ cards and $m$ shelves.
\begin{theorem}\label{thm:shelf}
For all integers $n,m\ge1$, the direction-tracking strategy satisfies
\begin{equation}\label{eq:main-optimality}
\mathbb E_{n,m}C_G
=\max_{A\in\mathcal A_n}\mathbb E_{n,m}C_A.
\end{equation}
More precisely, for every $0\le k<n$ and every history $h=(w_1,\ldots,w_k)$ of positive probability, with remaining set $R$, we have
\begin{equation}\label{eq:main-local}
G(h)\in\operatorname*{arg\,max}_{c\in R}
\mathbb P_{n,m}(W_{k+1}=c\mid W_1=w_1,\ldots,W_k=w_k).
\end{equation}
For the empty history, the probability on the right is the unconditional probability of the first card.
\end{theorem}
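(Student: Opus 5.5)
The plan is to prove Theorem~\ref{thm:shelf} as a special case of a general statement for weights of the form $\mathbb P(W=w)\propto f(v(w))$, with $f\ge0$ nonincreasing and $v$ the interior-valley count. By \cite[Theorem 3.1 and Corollary 3.3]{DFH}, the shelf-shuffling law $\mathbb P_{n,m}$ has exactly this form, with a nonincreasing $f$ (depending on $n,m$). So it suffices to prove \eqref{eq:main-local} for an arbitrary such law, and then deduce \eqref{eq:main-optimality}.

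The first step is a local comparison. Fix a history $h=(w_1,\dots,w_k)$ of positive probability. For $c\in R$, let $\Pi_c$ be the set of permutations with prefix $h$ and $(k+1)$st entry $c$. Then
\[
\mathbb P(W_{k+1}=c\mid h)\propto \sum_{w\in\Pi_c} f(v(w)).
\]
List $R$ in increasing order, and call two values of $R$ \emph{$R$-adjacent} if no element of $R$ lies strictly between them. For such a pair $r<r'$, let $\tau$ exchange the positions of $r$ and $r'$ in $w$. This is a bijection $\Pi_r\leftrightarrow\Pi_{r'}$ that fixes the prefix $h$. It also preserves every order relation between $r$ or $r'$ and any other remaining value; only comparisons with revealed values lying strictly between $r$ and $r'$ can change.

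Suppose the current direction is upward and $R_+\neq\varnothing$, so that $G(h)=\min R_+$. I would order the candidates as
\[
g=\min R_+,\quad \text{then the rest of } R_+ \text{ increasing},\quad \text{then } \max R_-,\ \text{then the rest of } R_- \text{ decreasing}.
\]
Here the step from $\max R_+$ to $\max R_-$ is replaced by a swap of $\max R_-$ with $\min R_+$, which are $R$-adjacent because $L$ separates them. The claim is that for each consecutive pair $(c,c')$ in this chain going away from $g$, the swap satisfies $v(\tau w)\in\{v(w),v(w)+1\}$ for every $w\in\Pi_c$. Since $f$ is nonincreasing, this gives $f(v(w))\ge f(v(\tau w))$. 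Summing over $w\in\Pi_c$ and chaining the inequalities shows that $g$ has maximal conditional probability. The downward case is symmetric (complementing values interchanges peaks and valleys only at the interior, so I would redo the check directly rather than rely on symmetry). The boundary cases $R_+=\varnothing$ or $R_-=\varnothing$, $k=1$, and the empty history (where the comparison is among $1,2,\dots,n$ moving upward) are handled the same way.

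The main obstacle is the valley-count inequality for a single swap. Only positions adjacent to the two swapped entries can change valley status, namely position $k+1$, its neighbours $k$ and $k+2$, and the position $p$ of $c'$ and its neighbours. I would do a case analysis on three things: whether $p=k+2$; where $w_k=L$ sits relative to $c$ and $c'$ (for non-boundary swaps it is on the same side of both, except at the $\max R_-/\min R_+$ step); and how $w_{k-1}$ compares to $L$, which encodes the direction. The point is that moving the card at position $k+1$ one step further from $L$, in the direction consistent with the last rise or fall, can create a valley at position $k$ or $k+1$ but never destroy one. Meanwhile, at position $p$ the relative order with the remaining-value neighbours is unchanged, so nothing changes there. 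If revealed values between $c$ and $c'$ matter, they only enter through $w_k$ and $w_{k-1}$, which is why the direction convention is needed. I expect the delicate subcase to be the crossing swap across $L$ when the direction is upward, where a new valley at position $k$ may appear but none is lost.

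Finally, for \eqref{eq:main-optimality}, write
\[
\mathbb E C_A=\sum_{j}\mathbb E\,\mathbb P(W_j=A_j\mid W_1,\dots,W_{j-1},U),
\]
where $U$ is the auxiliary randomness, independent of $W$. Each summand is at most $\mathbb E\max_c\mathbb P(W_j=c\mid W_1,\dots,W_{j-1})$, and by the local result this bound is attained by $G$.
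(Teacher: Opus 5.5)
Your plan is essentially the paper's proof: the swap of $R$-adjacent remaining values with $0\le v(\tau w)-v(w)\le 1$, starting from the preferred card, is Lemma~\ref{lem:swap}; chaining it outward along each side and across $L$ is Proposition~\ref{prop:ordering}; and the round-by-round conditioning with auxiliary randomness independent of $W$, fed by the Diaconis--Fulman--Holmes valley formula, is the proof of Theorem~\ref{thm:general} together with Section~\ref{sec:application}. One correction to the heuristic guiding your case analysis: the swap can also create or destroy a valley at position $k+2$ (when the two cards sit at positions $k+1,k+2$) and can destroy one at $k+1$ in the downward crossing case, so only the net change lies in $\{0,1\}$; moreover, the new valley at $L$ arises in the downward crossing case, not the upward one, where $L$ has a smaller left neighbour and is never a valley.
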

Formula~\eqref{eq:main-local} allows ties and does not assert uniqueness of the optimal strategy. It also shows that $G$ is optimal simultaneously for every $m$, although its rule does not depend on $m$. The maximum in~\eqref{eq:main-optimality} is attained by the explicit strategy $G$; it is not merely a supremum.
\subsection{Previous results and the proof outline}
Clay~\cite[Theorems 1.4 and 1.5]{Clay} studied the optimal strategy and its expected reward in the one-shelf case. Clay, Kuba, and Tripathi~\cite{CKT} determined the optimal strategy for biased one-shelf shuffling and studied its reward distribution, limit laws, and phase transitions as the bias approaches the endpoints. Their Section 4.1 discusses the complete-feedback problem for several shelves separately from the one-shelf results. We prove the unbiased shelf-shuffling optimality conjecture posed by Diaconis, Fulman, and Holmes in 2013~\cite[Section 5.1]{DFH}, for every positive integer deck size and number of shelves.

Results for related models help identify the difficulty. Bayer and Diaconis~\cite[Section 5.1]{BD} used complete-feedback card guessing to measure the order information retained after riffle shuffling. Liu~\cite{Liu} determined the optimal strategy after one riffle shuffle and obtained an asymptotic formula for its expected reward. Krityakierne and Thanatipanonda~\cite{KT} studied higher moments and gave a counterexample showing that the strategy for one riffle shuffle does not extend directly to repeated riffle shuffles. Thus, even with the same feedback, increasing the number of shuffles can change the ordering of conditional probabilities. Shelf shuffling also allows increasing and decreasing segments to alternate, so the riffle-shuffling strategy cannot simply be transferred. Our proof compares the weights of complete permutations with a fixed prefix, without requiring the player to identify the segment containing the current card.

The feedback regime also matters within the same shuffling model. Tripathi~\cite{TripathiPosition} determined the spectrum of the one-shelf position matrix and studied the optimal expected reward without feedback. The position matrix records the marginal probability of each card at each position. Complete feedback requires candidates to be compared anew after an arbitrary revealed prefix. The central issue here is therefore to turn an unconditional property of permutation weights into all of these conditional comparisons, rather than merely determine the position matrix.

The starting point is the permutation probability formula in~\cite[Theorem 3.1 and Corollary 3.3]{DFH}: the probability of an individual output permutation depends only on its number of interior valleys and is nonincreasing in that number. An interior position is a valley if its value is smaller than both neighboring values. Swapping two adjacent \emph{remaining values} preserves the revealed history and their comparisons with every other remaining value. The valley count can therefore change only near the boundary between the prefix and suffix. We show that, starting from the candidate preferred by the strategy, the swap either leaves the valley count unchanged or increases it by one. Nonincreasing weights then give a termwise comparison between the two sets of completions.

This argument does not use the particular form of the shuffling probability formula. Section~\ref{sec:valleys} states the general theorem for distributions with nonincreasing valley weights. Section~\ref{sec:swap} proves the swap lemma, and Section~\ref{sec:optimality} derives the conditional probability ordering and general optimality. Section~\ref{sec:application} verifies the hypotheses for shelf shuffling, completing the proof of Theorem~\ref{thm:shelf}. Section~\ref{sec:conclusion} formulates the reward questions that remain after optimality has been settled. The appendices discuss equality conditions for optimal choices, optimality when the number of shelves is unknown, and connections with complete-feedback guessing in uniformly shuffled decks.
\section{A general theorem for nonincreasing valley weights}\label{sec:valleys}
The main theorem will follow from a sufficient condition that does not refer to the shuffling procedure: the probability of an individual permutation depends only on its number of valleys and is nonincreasing in that number. Under this condition, direction tracking selects a most probable card after every history. We state this general result, which can be used for other permutation models. Section~\ref{sec:application} will then deduce the original conjecture from the known shuffling probability formula.

Let $S_n$ denote the permutations of $[n]$. An interior position $i$ of $w\in S_n$ is a \emph{valley} if $w_{i-1}>w_i<w_{i+1}$. Write
\[
 v(w)=\#\{i:2\le i\le n-1,\ w_{i-1}>w_i<w_{i+1}\}.
\]
For example, $(1,3,2,4)$ has one valley, whereas $(1,3,4,2)$ has none. The first and last positions are not counted. Two adjacent positions cannot both be valleys, so
\[
 0\le v(w)\le K,\qquad K=\left\lfloor\frac{n-1}{2}\right\rfloor.
\]
Indeed, for $n\ge3$, pair the interior positions $2,\ldots,n-1$ consecutively. Each pair contains at most one valley; if the last position is unpaired, allow one more. This gives the bound $\lceil(n-2)/2\rceil=K$. For $n=1,2$, there are no valleys, and the same bound holds.

Let $f:\{0,\ldots,K\}\to[0,\infty)$ be nonincreasing, and suppose its total weight over all permutations,
\[
 Z=\sum_{w\in S_n}f(v(w)),
\]
is strictly positive. Define a permutation distribution by
\begin{equation}\label{eq:general-law}
 \mathbb P_f(W=w)=\frac{f(v(w))}{Z},\qquad w\in S_n.
\end{equation}
The monotonicity assumption concerns the weight of an \emph{individual permutation}. The number of permutations with a given valley count can vary, so the probability of the event $\{v(W)=j\}$ need not be nonincreasing in $j$.

\begin{theorem}\label{thm:general}
Under~\eqref{eq:general-law}, let $p_h(c)$ be the conditional probability that the next card is $c$ after a history $h$ of positive probability, and let $R$ be the remaining set. Then
\[
G(h)\in\operatorname*{arg\,max}_{c\in R}p_h(c),
\qquad
\mathbb E_f C_G=\max_{A\in\mathcal A_n}\mathbb E_f C_A.
\]
\end{theorem}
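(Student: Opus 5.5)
The plan is to compare candidates after a fixed history by a bijection between completions, and then to deduce global optimality from the per-round comparison by linearity of expectation.

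Fix a history $h=(w_1,\dots,w_k)$ of positive probability, with remaining set $R$. For $c\in R$ let $\Omega_c$ denote the set of permutations in $S_n$ that have prefix $h$ and $W_{k+1}=c$. Then $p_h(c)$ is proportional to $\sum_{w\in\Omega_c}f(v(w))$. For two values $r,r'\in R$ that are adjacent in the ordering of $R$ (no remaining value lies strictly between them), define $\phi_{r,r'}$ to be the map on permutations that exchanges the values $r$ and $r'$.

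This map fixes the prefix and is a bijection $\Omega_r\to\Omega_{r'}$. Since no remaining value lies between $r$ and $r'$, every comparison between two positions $\ge k+1$ is preserved, except possibly the comparison between $r$ and $r'$ themselves. Every comparison between two prefix positions is preserved. The only prefix–suffix adjacency is between positions $k$ and $k+1$. So the valley status can change only at positions $k$ and $k+1$, and at the positions of $r'$ and its neighbours when $r'$ sits at position $k+2$, directly after $r$. The swap lemma I would prove is the following: if $r$ is the value closer to $G(h)$ in the chain described below, then $v(\phi_{r,r'}(w))\in\{v(w),v(w)+1\}$ for every $w\in\Omega_r$. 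Since $f$ is nonincreasing, this gives $f(v(\phi(w)))\le f(v(w))$ termwise. Summing over $\Omega_r$ then gives $p_h(r')\le p_h(r)$.

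Take the upward case with $R_+\ne\varnothing$; the other cases are symmetric, and the empty and $k=1$ histories are degenerate versions. Here $G(h)=\min R_+$. I would order $R$ as the chain
\[
\min R_+ < \cdots < \max R_+ \quad\text{and}\quad \min R_+ \to \max R_- \to \cdots \to \min R_- .
\]
Consecutive elements of this chain are adjacent in $R$. I would then verify the swap lemma in three cases: going up within $R_+$, going down within $R_-$, and crossing from $\min R_+$ to $\max R_-$ over $L$.

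Within $R_+$, both values exceed $L$, so position $k$ is unaffected. Since $w_{k-1}<L$, position $k$ is not a valley. Position $k+1$ is a valley exactly when $w_{k+2}>w_{k+1}$. Replacing $r$ by the larger value $r'$ can only destroy a rise into $w_{k+2}$ when $w_{k+2}=r'$. In that case the swap turns the pattern $L<r<r'$ into $L<r'>r$, and position $k+2$ may become a new valley; this is the only change, so the count rises by at most one.

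In the crossing case, $r=\min R_+$ and $r'=\max R_-$. Position $k$ becomes a valley, since $w_{k-1}<L>r'$ would make $L$ a peak rather than a valley; I must recheck this carefully. The essential point is that exchanging the comparison with $L$ adds at most one valley, at $k$ or $k+1$, and removes none.

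The main obstacle is exactly this bookkeeping at positions $k-1,\dots,k+3$, together with the endpoint cases $k+1=n$ and $k+2=n$. I would handle it by an exhaustive table of local rise/fall patterns, checking in each row that the net change is $0$ or $+1$.

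Given the chain inequalities, $p_h$ is nonincreasing along the chain starting at $G(h)$, so $G(h)\in\arg\max_{c\in R} p_h(c)$. For the global statement, write
\[
\mathbb E_f C_A=\sum_{j=1}^n \mathbb P(A_j=W_j).
\]
Since $A_j$ depends only on $W_1,\dots,W_{j-1}$ and on randomness independent of $W$, we have
\[
\mathbb P(A_j=W_j\mid W_1,\dots,W_{j-1},U)=p_{h}(A_j)\le \max_c p_h(c)=p_h(G(h)).
\]
Taking expectations and summing over $j$ gives $\mathbb E_f C_A\le \mathbb E_f C_G$.
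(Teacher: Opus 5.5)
Your overall plan matches the paper's proof: swap two adjacent remaining values, show that starting from the preferred value the valley count changes by $0$ or $+1$, sum over completions of the fixed prefix, and then pass to the total reward round by round. That last step is fine as written.

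The gap is the crossing comparison between $a=\max R_-$ and $b=\min R_+$. This is the only place the direction enters, so it carries the real content of the theorem, and what you wrote there does not prove it:
\begin{itemize}
\item The sentence about position $k$ contradicts itself.
\item The claim that the swap ``adds at most one valley, at $k$ or $k+1$, and removes none'' is false. Suppose $b,a$ occupy positions $k+1,k+2$ and the next remaining card $z$ satisfies $z>b$. Then the upward swap $(L,b,a,z)\mapsto(L,a,b,z)$ destroys the valley at position $k+2$ and creates one at $k+1$. The net change is $0$ only by cancellation.
\item The downward case cannot be obtained by symmetry. Complementation $x\mapsto n+1-x$ turns valleys into peaks, so it does not preserve the hypothesis; the rule $G(\varnothing)=1$ is itself not complement-invariant.
\item The downward mechanism is different. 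The new valley appears at $L$ itself, which needs $w_{k-1}>L$, while a valley at position $k+1$ may be lost.
\end{itemize}

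To close the gap, compute directly. Let $D=\mathbf1_{\{w_{k-1}>L\}}$ for $k\ge2$, and $D=0$ for $k=1$, when $L$ is the first position. Take any completion $w_b$ beginning with $h,b$ and set $w_a=\tau w_b$. Let $z$ be the card at position $k+2$ if $a,b$ are not in adjacent positions, and at position $k+3$ if they are. Since $z\in R\setminus\{a,b\}$, either $z<a$ or $z>b$. The valley status at each relevant position is as follows:
\begin{itemize}
\item In $w_b$, $L$ is a valley exactly when $D=1$, and $b$ is not a valley.
\item In $w_a$, $L$ is not a valley, and $a$ at position $k+1$ is a valley exactly when its right neighbor exceeds $a$.
\item In the adjacent case you must also track position $k+2$. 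There $a$ is a valley in $w_b$ exactly when $z$ exists and $z>b$, while $b$ is never a valley in $w_a$.
\end{itemize}
This gives $v(w_a)-v(w_b)=\mathbf1_{\{z>b\}}-D$ in the separated case, and $v(w_a)-v(w_b)=1-D-\mathbf1_{\{z\text{ exists and }z>b\}}$ in the adjacent case. Both lie in $\{0,1\}$ when $D=0$, where $b$ is preferred, and in $\{-1,0\}$ when $D=1$, where $a$ is preferred. That is exactly the required orientation.

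Two smaller fixes are also needed. First, verify the within-$R_-$ case explicitly rather than citing symmetry. Its only nontrivial configuration, $(L,b,a,z)\mapsto(L,a,b,z)$, changes the count by $1-\mathbf1_{\{z\text{ exists and }z>b\}}$, again with a possible loss at position $k+2$. Second, correct your within-$R_+$ sentence: position $k+1$ is never a valley there, since its left neighbor $L$ is smaller. Your conclusion for that case nevertheless stands.
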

\section{Swap comparisons with a fixed prefix}\label{sec:swap}
After fixing a revealed history, comparing two possible next cards requires comparing the total weights of two sets of complete permutations. We construct a bijection between these sets. The key is to swap only \emph{two adjacent values in the remaining set}: their comparisons with every other remaining card agree, so changes in the valley count can be determined locally at the boundary between prefix and suffix.

Fix a history $h$ of length $k<n$ and its remaining set $R$. To include the first round in the same argument, prepend a virtual value $0$ to the entire permutation. This does not change the valley count: the original first card now has the smaller left neighbor $0$ and still cannot be a valley, while comparisons at all other real positions are unchanged. The virtual value itself is not counted as a valley. For the empty history, take the boundary value to be $L=0$ and the direction to be upward; for a nonempty history, retain $L=w_k$. Thus, when only one card has been revealed, the direction from $0$ to $L$ is also upward.

Take adjacent values $a<b$ in $R$. Here adjacency means that no value in $R$ lies strictly between $a$ and $b$; revealed cards may lie in this interval. Let $\tau$ swap $a$ and $b$ in the complete permutation and fix all other cards. Since neither value has been revealed, $\tau$ preserves the entire history and bijectively maps permutations whose next card is $a$ to those whose next card is $b$. Its inverse is $\tau$ itself.

The following lemma identifies the direction of the swap that cannot decrease the valley count. If both cards lie on the same side of $L$, prefer the nearer one; if they lie on opposite sides, prefer the one in the current direction.

\Needspace{13\baselineskip}
\begin{lemma}\label{lem:swap}
Choose $c$ from $a,b$ by the following rule, and write $d$ for the other card:
\[
\begin{array}{c|c}
\text{Condition}&c\\\hline
L<a<b&a\\
a<b<L&b\\
a<L<b,\ \text{upward}&b\\
a<L<b,\ \text{downward}&a
\end{array}
\]
For every complete permutation $w$ beginning with $h,c$,
\begin{equation}\label{eq:swap}
 0\le v(\tau w)-v(w)\le1.
\end{equation}
\end{lemma}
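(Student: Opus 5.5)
Since $a$ and $b$ are adjacent in $R$, every remaining value other than $a,b$ compares with $a$ exactly as it compares with $b$. We only need comparisons between suffix entries, or between a suffix entry and the boundary $L$, because prefix values sit only next to prefix values or to position $k+1$. So in $w$ and $\tau w$ every comparison between two suffix entries other than the pair $\{a,b\}$ itself is unchanged. Write the suffix of $w$ as $c=x_{k+1},x_{k+2},\dots$.

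The first step is to localize. A valley at position $i$ depends only on $x_{i-1},x_i,x_{i+1}$, so the valley status can change only at positions whose window contains both $a$ and $b$, or contains the boundary pair $(L,c)$. Positions strictly inside the prefix are untouched. For $i\le k-1$ the window involves only prefix values, and position $k$ compares $w_{k-1},L,x_{k+1}$. The value at position $k+1$ switches from $c$ to $d$, so the comparison $L$ versus $x_{k+1}$ may flip, but it flips only when $L$ lies between $a$ and $b$.

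So I will split by the table.

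\textbf{Case $L<a<b$, $c=a$.} Here $L<x_{k+1}$ holds in both $w$ and $\tau w$, so positions $\le k$ are unaffected. Position $k+1$ is never a valley in either permutation, since its left neighbour $L$ is smaller. The only remaining possible change is where $a$ and $b$ are adjacent to each other or both lie in a window of length three, which happens when $d=x_{k+2}$ or $d=x_{k+3}$.

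I will check these directly. Suppose $x_{k+2}=b$. In $w$ we have $L<a<b$ and then $x_{k+3}$; in $\tau w$ we have $L<b>a$ and then $x_{k+3}$. Position $k+2$ is a valley in $\tau w$ exactly when $x_{k+3}>a$, which is equivalent to $x_{k+3}>b$. In $w$ that position is not a valley, and position $k+3$ can change its status only through the comparison with $b$ versus $a$, which is identical for $x_{k+3}$ and later values. The net change is therefore $0$ or $+1$. Positions with $d$ further along but still inside a window are handled the same way, noting that swapping adjacent values $a,b$ that are not next to each other changes nothing.

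\textbf{Case $a<b<L$, $c=b$.} Here $x_{k+1}<L$ in both permutations. Position $k$ could be a valley only if $w_{k-1}>L$, and that condition is unchanged. Position $k+1$ with $c=b$ versus $d=a$ is the main point: moving the smaller value next to $L$ can create a valley at $k+1$ but cannot destroy one. For example, $b$ followed by $x_{k+2}>b$ is a valley in both, while $b$ followed by $a$ becomes $a,b$, giving a new valley. Again the change lies in $\{0,1\}$.

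\textbf{Case $a<L<b$, upward, $c=b$.} Upward means $w_{k-1}<L$, or $L=0$ with the virtual prefix, so position $k$ is not a valley in $w$. In $\tau w$ we have $L>a=x_{k+1}$, and position $k$ is still not a valley because $w_{k-1}<L$. Position $k+1$: in $w$ it holds $b>L$, so it is not a valley; in $\tau w$ it holds $a<L$, and it becomes a valley iff $x_{k+2}>a$. Any loss elsewhere comes only from the $a$--$b$ adjacency, and I must show it compensates correctly. For instance, with $w=\dots L,b,a,y$, position $k+2$ may be a valley in $w$. After the swap, $\tau w=\dots L,a,b,y$: position $k+1$ is a valley and position $k+2$ is not, so the net change is $0$.

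\textbf{Case $a<L<b$, downward, $c=a$.} This is symmetric. Downward means $w_{k-1}>L$, so in $w$ position $k$ is a valley iff $x_{k+1}>L$, which is false for $a$, while in $\tau w$ it is true for $b$. Position $k+1$ in $w$ holds $a<L$ and is a valley iff $x_{k+2}>a$; in $\tau w$ it holds $b$, which is not a valley. This is the delicate bookkeeping: I expect to show that the gain at $k$ (always $+1$) and the loss at $k+1$ (when $x_{k+2}>a$) combine with the $a$--$b$ adjacency subcases to give a net change in $\{0,1\}$.

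The main obstacle is exhaustive bookkeeping when $d$ sits at position $k+2$ or $k+3$, where windows overlap the boundary. I would organize it as a short finite table indexed by the position of $d$ (either $k+2$, $k+3$, or farther) together with the single comparison bit of the neighbouring value, which is well defined because of adjacency.
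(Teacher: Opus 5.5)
Your route is the same as the paper's. You use that $a,b$ compare identically with every other remaining value, localize the effect of $\tau$ to the prefix--suffix boundary and a possible $a$--$b$ adjacency, and then check each row of the table. As written, though, the proposal is not yet a proof.

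\textbf{Downward cross case.} This case is explicitly left as something you ``expect to show'', so one of the four rows is unproved.

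\textbf{Case~2.} You only examine position $k+1$. In the configuration $w=(\ldots,L,b,a,z,\ldots)$, the entry $a$ at position $k+2$ is a valley of $w$ whenever $z$ exists and $z>b$, and the swap destroys it. So the change is $1-\mathbf{1}_{\{z>b\}}$, not the unconditional ``new valley'' your sentence describes. The bound $\ge0$ holds only because the gain at $k+1$ is then exactly one. The bound $\le1$ needs the unstated remark that positions $k$ and $k+2$ cannot gain a valley here.

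\textbf{Upward cross case.} You illustrate only the adjacent subcase in which $a$ at position $k+2$ was a valley. A small slip there: since $L>a\ge1$, the virtual value enters as $w_{k-1}=0$ when $k=1$, never as $L=0$.

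\textbf{How to close it.} The bookkeeping you anticipate is short once the localization is sharpened, as the paper does. Since $c$ occupies position $k+1$, $\tau$ can flip only two adjacent comparisons: $L$ versus $x_{k+1}$, and, only when $d=x_{k+2}$, the comparison between positions $k+1$ and $k+2$. A value sitting between $a$ and $b$ compares identically with both. Consequently:
\begin{itemize}
\item the case $d=x_{k+3}$ is vacuous;
\item only positions $k$, $k+1$, $k+2$ can change status;
\item each row reduces to two configurations: $(L,c,y,\ldots)$ with $y\notin\{a,b\}$, and $(L,c,d,z,\ldots)$.
\end{itemize}

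In the downward cross case ($w_{k-1}>L$, $c=a$):
\begin{itemize}
\item In the first configuration the change is $1-\mathbf{1}_{\{y>b\}}$. Position $k$ gains a valley, and position $k+1$ loses one exactly when $y>b$.
\item In $(L,a,b,z,\ldots)\mapsto(L,b,a,z,\ldots)$ the change is $\mathbf{1}_{\{z>b\}}$, read as $0$ if $z$ does not exist. Position $k$ gains a valley. The entry $a$ at $k+1$ was always a valley and is lost. The entry $a$ at $k+2$ in $\tau w$ is a valley exactly when $z$ exists and $z>b$.
\end{itemize}
The paper handles both directions of the cross case at once. It writes $v(w_a)-v(w_b)$ as $\mathbf{1}_{\{y>b\}}-D$ and $1-D-\mathbf{1}_{\{z>b\}}$ in the two configurations, where $D$ is the downward indicator. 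With these computations and the Case~2 correction filled in, your argument is complete.
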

\begin{proof}
Every suffix value $x$ other than $a,b$ satisfies $x<a$ or $x>b$, so its comparison with $a$ agrees with its comparison with $b$. A valley is determined by two adjacent comparisons. Hence every adjacent comparison within the suffix that does not directly join $a$ and $b$ is unchanged by the swap. Comparisons within the prefix are also unchanged. There is only one comparison across the prefix boundary: that between $L$ and the next card.

If the positions of $a,b$ in the suffix are not adjacent, there is no direct comparison between them, so only the comparison across the prefix boundary can change. The only positions whose valley status can change are therefore $L$ and the first suffix position. If their positions are adjacent, one is the first suffix position and the other must be the second. In that case, only $L$ and the first two suffix positions can change valley status. This explains why it suffices below to consider these positions, without distinguishing the possible more distant positions of the other card.

\emph{Case 1: $L<a<b$.}
The preferred value is $a$. If the positions of $a,b$ are not adjacent, the boundary comparison is upward before and after the swap. All adjacent comparisons, and hence the valley count, are unchanged. If their positions are adjacent, compare
\[
 (L,a,b,z,\ldots)\quad\text{and}\quad(L,b,a,z,\ldots),
\]
where $z$ denotes the third remaining card when it exists; otherwise $z$ and all later entries are omitted. In both permutations, the right neighbor of $L$ is larger, so the valley status of $L$ is unchanged. Originally, both $a$ and $b$ have smaller left neighbors and are not valleys. After the swap, $b$ is still not a valley, while $a$ is a valley exactly when $z$ exists and $z>b$. If $z<a$, then $a$ has a smaller right neighbor; if $z$ does not exist, then $a$ is the final position. Neither situation adds a valley.

\emph{Case 2: $a<b<L$.}
The preferred value is $b$. If the positions of $a,b$ are not adjacent, the boundary comparison is downward before and after the swap, so again the valley count is unchanged. If their positions are adjacent, compare
\[
 (L,b,a,z,\ldots)\quad\text{and}\quad(L,a,b,z,\ldots).
\]
In both permutations, $L$ has a smaller right neighbor and is not a valley. Originally, $b$ has a smaller right neighbor and is not a valley, while $a$ is a valley exactly when $z$ exists and $z>b$. After the swap, both neighbors of $a$ are larger, so $a$ is always a valley; $b$ has a smaller left neighbor and is not a valley. The valley count therefore increases by $1-\mathbf1_{\{z\text{ exists and }z>b\}}$, which is zero or one.

\emph{Case 3: $a<L<b$.}
The history is now nonempty. Let $D$ equal $1$ if the current direction is downward and $0$ if it is upward. Let $w_b$ be any permutation beginning with $h,b$, and set $w_a=\tau w_b$. If the positions of $a,b$ are not adjacent, let $z$ be the second suffix card. It necessarily exists and satisfies $z<a$ or $z>b$. In $w_b$, the position of $L$ is a valley exactly when $D=1$, and the first suffix card $b$ has a smaller left neighbor and is not a valley. In $w_a$, the position of $L$ has a smaller right neighbor and is not a valley, while the first suffix card $a$ is a valley exactly when $z>b$. All other valley statuses are unchanged. Thus
\begin{equation}\label{eq:cross-separated}
 v(w_a)-v(w_b)=\mathbf1_{\{z>b\}}-D.
\end{equation}
If the positions of $a,b$ are adjacent, compare
\[
 (L,b,a,z,\ldots)\quad\text{and}\quad(L,a,b,z,\ldots).
\]
In $w_b$, the position of $L$ contributes $D$ valleys, $b$ is not a valley, and $a$ contributes $\mathbf1_{\{z\text{ exists and }z>b\}}$ valleys. In $w_a$, the position of $L$ is not a valley, $a$ is always a valley, and $b$ is not a valley. Consequently,
\begin{equation}\label{eq:cross-adjacent}
 v(w_a)-v(w_b)=1-D-\mathbf1_{\{z\text{ exists and }z>b\}}.
\end{equation}
When $L$ is the first real card, its left neighbor is the virtual value $0$ and $D=0$, so the count at $L$ remains valid. When the suffix consists only of $a,b$, the indicator in~\eqref{eq:cross-adjacent} is zero, which also covers the final-position case.

If the direction is upward, then $D=0$ and both right-hand sides are zero or one. The preferred value is $b$, and $v(w_a)\ge v(w_b)$. If the direction is downward, then $D=1$ and both right-hand sides are zero or minus one. The preferred value is $a$, and $v(w_b)\ge v(w_a)$. These statements give~\eqref{eq:swap}.
\end{proof}

The lemma compares individual permutations. Since $f$ is nonincreasing, swapping from the preferred value cannot increase the permutation weight. Summing over all completions of the fixed prefix will therefore compare the conditional probabilities of the next card.
\section{Conditional probability ordering and optimality}\label{sec:optimality}
The swap lemma does more than identify an optimal candidate. It orders conditional probabilities on each side of the boundary: on the same side, a remaining value closer to the last revealed card is at least as favorable; between the two nearest cards on opposite sides, the current direction determines the comparison. We first prove this ordering and then use complete feedback to pass from individual rounds to the total reward.

Work under the distribution~\eqref{eq:general-law}. Fix a history $h=(w_1,\ldots,w_k)$ of positive probability and write
\[
 p_h(c)=\mathbb P_f(W_{k+1}=c\mid W_1=w_1,\ldots,W_k=w_k),
 \qquad c\in R.
\]
For the empty history, $p_h(c)=\mathbb P_f(W_1=c)$. The boundary value is $L=w_k$ for a nonempty history and $L=0$ for the empty history.

\begin{proposition}\label{prop:ordering}
List the remaining cards on the two sides of $L$ as
\[
 u_1<\cdots<u_t<L<r_1<\cdots<r_s.
\]
On each nonempty side, respectively,
\begin{equation}\label{eq:side-order}
 p_h(u_1)\le\cdots\le p_h(u_t),\qquad
 p_h(r_1)\ge\cdots\ge p_h(r_s).
\end{equation}
If both sides are nonempty, then $p_h(r_1)\ge p_h(u_t)$ when the direction is upward, and $p_h(u_t)\ge p_h(r_1)$ when it is downward. In particular, $G(h)$ always maximizes $p_h(c)$.
\end{proposition}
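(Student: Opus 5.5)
The plan is to turn Lemma~\ref{lem:swap} into comparisons of conditional probabilities by summing over completions. Fix $h$ of positive probability. For $c\in R$ let $\Omega_c$ be the set of permutations beginning with $h,c$. Then
\[
p_h(c)=\frac{\sum_{w\in\Omega_c}f(v(w))}{\sum_{c'\in R}\sum_{w\in\Omega_{c'}}f(v(w))},
\]
and the denominator is positive, so comparing $p_h(c)$ with $p_h(d)$ reduces to comparing the numerators. For adjacent remaining values $a<b$, the swap $\tau$ is a bijection $\Omega_c\to\Omega_d$, where $c$ is the preferred value from the lemma and $d$ is the other one. The lemma gives $v(\tau w)\ge v(w)$ for every $w\in\Omega_c$. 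Since $f$ is nonincreasing, $f(v(\tau w))\le f(v(w))$ termwise, and summing gives $p_h(d)\le p_h(c)$. Only the lower bound in~\eqref{eq:swap} is needed here.

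Next, apply this to each consecutive pair in the list $u_1<\cdots<u_t<L<r_1<\cdots<r_s$. Two consecutive values are adjacent in $R$ in the lemma's sense, because no remaining value lies strictly between them, even though revealed cards may. For the pair $r_i<r_{i+1}$ we are in the case $L<a<b$, so $c=r_i$ and $p_h(r_i)\ge p_h(r_{i+1})$. For the pair $u_i<u_{i+1}$ we are in the case $a<b<L$, so $c=u_{i+1}$ and $p_h(u_{i+1})\ge p_h(u_i)$. Chaining these inequalities gives~\eqref{eq:side-order}. If both sides are nonempty, $u_t$ and $r_1$ are adjacent in $R$ with $a=u_t<L<b=r_1$. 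The cross case of the lemma then gives $p_h(r_1)\ge p_h(u_t)$ when the direction is upward and the reverse when it is downward.

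Finally, identify $G(h)$ with the maximizer. By~\eqref{eq:side-order}, the maximum of $p_h$ over $R$ is attained at $u_t$ or at $r_1$, among whichever sides are nonempty. For the empty history, $L=0$, the left side is empty, and $G(\varnothing)=1=r_1$, which is the maximizer. For a nonempty history, check the four branches of~\eqref{eq:strategy}. When upward with $R_+\neq\varnothing$, $G(h)=r_1$; this is the maximizer by the cross comparison, or trivially if $R_-$ is empty. When upward with $R_+=\varnothing$, $G(h)=u_t$, the only candidate. The two downward branches are symmetric. The main point needing care is the convention matching the paper's setup: the direction is upward when $k=1$, and $L=0$ with direction upward for the empty history. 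This is exactly how the lemma's cases were set up with the virtual value $0$, so no new obstacle should arise; the real work was already done in Lemma~\ref{lem:swap}.
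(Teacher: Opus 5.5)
Your proposal is correct and follows the paper's proof essentially step for step. Both arguments sum the termwise inequality $f(v(\tau w))\le f(v(w))$ over the $\tau$-bijection of completions, then chain the comparisons along consecutive remaining values on each side. Both use the cross case of Lemma~\ref{lem:swap} for $u_t$ versus $r_1$, and read off $G(h)$ after dividing by the positive common denominator.
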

\begin{proof}
To avoid carrying the same denominator through every comparison, define the total weight, for any history $h$ and $c\in R$, by
\[
 M_h(c)=\sum_{\substack{w\in S_n:\,(w_1,\ldots,w_k)=h\\ w_{k+1}=c}}f(v(w)).
\]
The sum runs over all permutations with the specified prefix, including those of weight zero. For two adjacent values in $R$, write $c$ for the preferred value in Lemma~\ref{lem:swap} and $d$ for the other value. The swap $\tau$ is a bijection between the two sets of completions, so
\begin{align}
 M_h(c)-M_h(d)
 &=\sum_{\substack{w\in S_n:\,(w_1,\ldots,w_k)=h\\ w_{k+1}=c}}
 \bigl(f(v(w))-f(v(\tau w))\bigr)\ge0.\label{eq:weight-difference}
\end{align}
Each summand is nonnegative because $v(\tau w)\ge v(w)$ and $f$ is nonincreasing.

Adjacent remaining values on one side are also adjacent in $R$. Applying the first case of the lemma successively on $R_+$ gives $M_h(r_1)\ge\cdots\ge M_h(r_s)$. Applying the second case on $R_-$ gives $M_h(u_1)\le\cdots\le M_h(u_t)$. If both sides are nonempty, no remaining value lies between $u_t$ and $r_1$, so the third case gives the comparison between them according to the direction.

The probability of the history $h$ is $Z^{-1}\sum_{d\in R}M_h(d)$. Since this probability is positive, the denominator in
\begin{equation}\label{eq:conditional}
 p_h(c)=\frac{M_h(c)}{\sum_{d\in R}M_h(d)}
\end{equation}
is strictly positive. All the weight comparisons therefore become conditional probability comparisons. The best card on either side is the one nearest to $L$, and the comparison between sides gives precisely $G(h)$. If one side is empty, its counterpart's ordering suffices. For the empty history, only $R_+$ is present, so guessing $1$ first is optimal. If only one card remains, its conditional probability is $1$.
\end{proof}

The conditional probabilities are defined using the entire history, but choosing a maximizer requires only the remaining set, the last revealed card, and the most recent direction. The conditional distribution itself may retain more information about the history; an optimal choice does not require computing it explicitly.

\begin{proof}[Proof of Theorem~\ref{thm:general}]
For each $0\le k<n$, write $H_k=(W_1,\ldots,W_k)$ for the random history, with $H_0$ empty. Fix a history $h$ of positive probability and extend $p_h(c)$ to be zero on revealed cards. A deterministic strategy chooses a card $A_{k+1}(h)$ after this history, so Proposition~\ref{prop:ordering} gives
\[
 \mathbb P_f(A_{k+1}=W_{k+1}\mid H_k=h)
 =p_h(A_{k+1}(h))\le p_h(G(h)).
\]
A randomized strategy can be written as $A_{k+1}=A_{k+1}(H_k,U)$, where $U$ is independent of the entire permutation $W$. Since $H_k$ is a function of $W$, the auxiliary variable $U$ and the next card remain independent conditional on $H_k=h$. More explicitly, for any auxiliary event $\{U\in B\}$,
\[
 \mathbb P_f(U\in B,W_{k+1}=c\mid H_k=h)
 =\mathbb P(U\in B)\,p_h(c).
\]
To see this, apply independence to the event $\{H_k=h,W_{k+1}=c\}$ and divide by $\mathbb P_f(H_k=h)>0$. Thus, if $\alpha_h(c)$ is the probability of guessing $c$ after this history, then
\[
 \mathbb P_f(A_{k+1}=W_{k+1}\mid H_k=h)
 =\sum_{c\in[n]}\alpha_h(c)p_h(c)
 \le p_h(G(h)),
\]
where $\alpha_h(c)\ge0$ and $\sum_c\alpha_h(c)=1$. Deterministic strategies are included in this formula.

Taking expectations over all histories of positive probability yields
\[
 \mathbb P_f(A_{k+1}=W_{k+1})
 \le\mathbb P_f(G(H_k)=W_{k+1}).
\]
Histories of probability zero do not affect either probability. Guesses change neither the actual deck order nor the cards subsequently revealed, so $H_k$ has the same distribution under the two strategies being compared. Summing over $k=0,\ldots,n-1$ gives $\mathbb E_f C_A\le\mathbb E_f C_G$.
\end{proof}
\section{Proof of the shelf-shuffling conjecture}\label{sec:application}
The general theorem is now proved. To obtain the shelf-shuffling result, it remains to show that the probability of the actual output permutation is a nonincreasing function of its valley count. We first specify the correspondence between the shuffling and permutation conventions, then use the probability formula of Diaconis, Fulman, and Holmes and give a short proof of its monotonicity. The two conclusions of the main theorem, for each history and for the total expectation, then follow together.

When applying Theorem~\ref{thm:general}, it is necessary to distinguish the actual output permutation from its inverse. Assign label $2j-1$ to cards placed on top of shelf $j$ and label $2j$ to cards placed on its bottom. The labels are independent and uniform on $\{1,\ldots,2m\}$. Because cards are processed in decreasing order, the top cards on each shelf end up in increasing order, the bottom cards in decreasing order, and, on each shelf, all top cards precede all bottom cards. Thus the output is obtained by sorting labels increasingly, with increasing order within odd-label groups and decreasing order within even-label groups. Empty groups do not change the parity of later labels. This is precisely the output convention in~\cite[Section 3.1, Description 1]{DFH}. Fulman and Petersen~\cite{FP} use $P$-partitions to give a unified account of permutation probabilities for shelf and riffle shuffling, explicitly distinguishing the output permutation from its inverse. All formulas below concern the actual output deck order.

By~\cite[Theorem 3.1]{DFH}, the output probability of any $w\in S_n$ is $q_{n,m}(v(w))$, where
\begin{equation}\label{eq:shelf-weight}
 q_{n,m}(v)=\frac{4^{v+1}}{2(2m)^n}
 [t^{m-v-1}]\frac{(1+t)^{n-1-2v}}{(1-t)^{n+1}},
 \qquad 0\le v\le K.
\end{equation}
Here $[t^r]$ denotes the coefficient of $t^r$ in a formal power series, taken to be zero when $r<0$. The original formula is written as
\[
 \frac{1}{2(2m)^n}[t^m]
 \frac{(1+t)^{n+1}}{(1-t)^{n+1}}
 \left(\frac{4t}{(1+t)^2}\right)^{v+1}.
\]
Factoring out $4^{v+1}t^{v+1}$ leaves the exponent $n-1-2v$ on $(1+t)$ and shifts the coefficient index to $m-v-1$, giving~\eqref{eq:shelf-weight}.

The required monotonicity was proved in~\cite[Corollary 3.3]{DFH}. The following coefficient proof makes its validity for all $n,m$ immediate.

\begin{lemma}\label{lem:weight}
For every $n,m\ge1$, the sequence $q_{n,m}(0),\ldots,q_{n,m}(K)$ is nonincreasing.
\end{lemma}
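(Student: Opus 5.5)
The plan is to reduce the claim $q_{n,m}(v)\ge q_{n,m}(v+1)$, for $0\le v<K$, to the nonnegativity of a single power-series coefficient, using formula~\eqref{eq:shelf-weight} directly. Fix such a $v$ and set $N=n-1-2v$ and $r=m-v-2$. Since $v+1\le K=\lfloor (n-1)/2\rfloor$, we have $N-2=n-1-2(v+1)\ge0$. Hence
\[
 g(t)=\frac{(1+t)^{N-2}}{(1-t)^{n+1}}=\sum_{j\ge0}g_jt^j
\]
is a genuine power series with nonnegative coefficients. We set $g_j=0$ for $j<0$.

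Step one is the trivial case. If $r<0$, then the coefficient index $m-v-2$ in $q_{n,m}(v+1)$ is negative, so $q_{n,m}(v+1)=0\le q_{n,m}(v)$ and there is nothing to prove. So assume $r\ge0$.

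Step two rewrites both weights in terms of $g$. Cancelling the common positive factor $4^{v+1}/(2(2m)^n)$, the inequality becomes
\[
 [t^{r+1}]\,(1+t)^2g(t)\;\ge\;4\,[t^{r}]\,g(t),
\qquad\text{that is,}\qquad
 g_{r+1}+2g_r+g_{r-1}\ge 4g_r .
\]
Equivalently, we need $g_{r+1}-2g_r+g_{r-1}\ge0$, a convexity statement about the coefficients of $g$.

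Step three, which is the only real point, is to recognise this second difference as a coefficient. Since $(1-t)^2=1-2t+t^2$, we have
\[
 g_{r+1}-2g_r+g_{r-1}=[t^{r+1}]\,(1-t)^2g(t)=[t^{r+1}]\,\frac{(1+t)^{N-2}}{(1-t)^{n-1}} .
\]
The right-hand side is a product of a polynomial with nonnegative coefficients and $(1-t)^{-(n-1)}=\sum_j\binom{n-2+j}{j}t^j$. The latter has nonnegative coefficients because $n-1\ge0$; when $n=1$ it is simply $1$. Hence the product has nonnegative coefficients, and the convention $g_{-1}=0$ is consistent with this identity when $r=0$.

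This proves the monotonicity for every $n,m\ge1$. The main obstacle I expect is the bookkeeping of edge cases rather than any deep estimate. These are: negative coefficient indices, which are handled by the zero convention and the case $r<0$; ensuring that the exponent $N-2$ is nonnegative, which is handled by $v+1\le K$; and small $n$ where $K=0$, in which case the statement is vacuous.
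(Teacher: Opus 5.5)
Your proposal is correct and is essentially the paper's proof. The paper writes $q_{n,m}(v)-q_{n,m}(v+1)$ as the single coefficient $[t^{m-v-1}]$ of $(1+t)^{n-3-2v}\bigl((1+t)^2-4t\bigr)/(1-t)^{n+1}$ and uses $(1+t)^2-4t=(1-t)^2$; this is your identity $g_{r+1}-2g_r+g_{r-1}=[t^{r+1}](1-t)^2g(t)$ in different notation, and the paper handles your $r<0$ case through the zero-coefficient convention.
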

\begin{proof}
If $K=0$, there are no adjacent terms to compare. Suppose $0\le v<K$. Expressing two adjacent terms under the same coefficient extraction gives
\begin{align*}
 q_{n,m}(v)-q_{n,m}(v+1)
 &=\frac{4^{v+1}}{2(2m)^n}[t^{m-v-1}]
 \frac{(1+t)^{n-3-2v}\bigl((1+t)^2-4t\bigr)}{(1-t)^{n+1}}\\
 &=\frac{4^{v+1}}{2(2m)^n}[t^{m-v-1}]
 \frac{(1+t)^{n-3-2v}}{(1-t)^{n-1}}\ge0.
\end{align*}
We used $(1+t)^2-4t=(1-t)^2$. Since $v<K$, we have $n\ge3$ and $n-3-2v\ge0$, so the numerator is a polynomial with nonnegative coefficients. The reciprocal of the denominator expands as
\[
 (1-t)^{-(n-1)}=\sum_{r\ge0}\binom{n+r-2}{r}t^r,
\]
which also has nonnegative coefficients. If $m-v-1<0$, the extracted coefficient is zero by convention, and the inequality still holds.
\end{proof}

\begin{proof}[Proof of Theorem~\ref{thm:shelf}]
Formula~\eqref{eq:shelf-weight} assigns probability $q_{n,m}(v(w))$ to each output permutation $w$. Nonnegativity of probabilities and Lemma~\ref{lem:weight} show that $q_{n,m}$ is nonnegative and nonincreasing. Taking $f=q_{n,m}$, normalization gives $Z=1$. Theorem~\ref{thm:general} therefore applies and gives~\eqref{eq:main-local} and~\eqref{eq:main-optimality}.
\end{proof}
\section{Conclusions and further questions}\label{sec:conclusion}
We have proved that the direction-tracking strategy is optimal under complete feedback after shelf shuffling for all $n,m\ge1$. The proof gives more than maximal total expectation. After every history of positive probability, the remaining cards on either side of the last revealed card are ordered by conditional probability according to their distance from it; the most recent direction determines the comparison between the nearest cards on opposite sides. The known valley probability formula and our swap comparison together establish the original conjecture. The swap argument itself applies to any distribution with nonincreasing valley weights.

Knowing the strategy turns the subsequent study into the analysis of an explicit random variable. Write the optimal expected reward and the harmonic numbers as
\[
V_{n,m}=\mathbb E_{n,m}C_G,
\qquad H_r=\sum_{j=1}^{r}\frac1j\quad(r\ge1).
\]
The questions below concern, respectively, substantial retained order information, proximity to a uniform permutation, and changes to the shuffling rule. They are not among the results proved here.

\subsection{Reward asymptotics for a fixed number of shelves}
Clay~\cite[Section 6, Conjecture 3]{Clay} proposed that the optimal expectation should be approximately $nH_{2m}/(2m)$ when the ratio of deck size to number of shelves is not too small. The intuition is to view the output as a collection of monotone segments and add the usable order information within each segment. We have proved that the proposed strategy is indeed optimal, but this approximation still requires control of the segment boundaries that the player cannot observe.

\begin{problem}\label{prob:fixed}
For each fixed integer $m\ge2$, is it true that
\[
\lim_{n\to\infty}\frac{V_{n,m}}n=\frac{H_{2m}}{2m}?
\]
If so, what is the order of the error $V_{n,m}-nH_{2m}/(2m)$, and how does it depend on $m$?
\end{problem}

Problem~\ref{prob:fixed} expresses the approximation as a limit and an error question for a fixed number of shelves. The one-shelf reward distribution and limit laws are studied in~\cite{CKT}. Kuba and Panholzer~\cite{KP} studied the limiting distribution of the optimal reward after one riffle shuffle, and Kuba~\cite{KubaBias} analyzed strategies, reward distributions, and changes in limiting behavior as the bias varies in biased riffle shuffling. These results suggest studying fluctuations of the optimal shelf-shuffling reward as well as the leading term of its expectation. Incorrect guesses at boundaries between shelves need not admit the same decomposition as in the one-shelf case. Even an expectation error bound uniform in $m$ would show how quickly the number of shelves may grow with the deck size while the approximation remains valid.

\subsection{When does the guessing reward approach the uniform benchmark?}
For a uniform permutation, all remaining cards are equally likely at every round, so the optimal expectation is exactly $H_n$. Under any distribution, the largest conditional probability when $r$ cards remain is at least $1/r$. Summing over rounds gives $V_{n,m}\ge H_n$. The difference $V_{n,m}-H_n$ thus quantifies order information that remains usable for prediction after shuffling.

\begin{problem}\label{prob:growing}
As $n\to\infty$ and the number of shelves $m=m(n)$ grows with the deck size, under what growth conditions do we have, respectively,
\[
\frac{V_{n,m(n)}}{H_n}\longrightarrow1,
\qquad\text{or, more strongly,}\qquad
V_{n,m(n)}-H_n\longrightarrow0?
\]
Can necessary and sufficient conditions be given for each limit?
\end{problem}

Vanishing relative error and vanishing absolute advantage are different goals. Following~\cite{DFH}, Chen and Ottolini~\cite{CO} determined the total variation cutoff scale for unbiased shelf shuffling: the critical order of the number of shelves is $n^{5/4}$. Clay~\cite{ClayStats} studied central limit theorems for descents and inversions in the same shuffling model. These works describe randomization of the full distribution and of particular permutation statistics, respectively. The guessing reward gives another quantity for comparison, determined by successive revelation of the cards. Specifically, let $\mu_{n,m}$ be the shuffling distribution on $S_n$, let $U_n$ be the uniform distribution, and set
\[
d_{\mathrm{TV}}(\mu,\nu)=\sup_{B\subseteq S_n}|\mu(B)-\nu(B)|.
\]
Since $0\le C_G\le n$ and $G$ always guesses a remaining card, we have $\mathbb E_{U_n}C_G=H_n$, and hence
\[
0\le V_{n,m}-H_n\le n\,d_{\mathrm{TV}}(\mu_{n,m},U_n).
\]
The upper bound follows by comparing the tail probabilities in $\mathbb E C_G=\int_0^n\mathbb P(C_G>t)\,dt$. It does not imply that the two scales coincide: the guessing reward may approach the uniform benchmark before the full distribution is well mixed. Determining whether the bound captures the correct scale requires understanding which prefixes retain a substantial predictive advantage.

\subsection{Biased shuffling and the scope of the strategy}
The unbiased top-or-bottom choice is a key assumption of the valley weight formula. Now let each card still choose a shelf independently and uniformly, but place it on top with probability $p\in(0,1)$ and on the bottom with probability $1-p$, leaving all other rules unchanged. Write $\mathbb E_{n,m,p}$ for expectation in this model. The original strategy $G$ is still defined by~\eqref{eq:strategy}.

\begin{problem}\label{prob:biased}
For given $n\ge3,m\ge2$, determine the parameter set for which the original direction-tracking strategy remains optimal:
\[
\left\{p\in(0,1):
\mathbb E_{n,m,p}C_G
=\max_{A\in\mathcal A_n}\mathbb E_{n,m,p}C_A\right\}.
\]
For the remaining parameters, can an optimal choice still be determined solely by the remaining set, the last revealed card, and the most recent direction?
\end{problem}

We have proved that $p=1/2$ belongs to this set. Clay, Kuba, and Tripathi~\cite{CKT} determined the optimal strategy in the biased one-shelf case, so Problem~\ref{prob:biased} concerns several shelves. For the biased shelf model, Tripathi~\cite{TripathiLaw} gave an exact permutation probability formula, showing that it depends jointly on the numbers of descents and valleys, and studied its mixing scale. Another work of Tripathi~\cite{TripathiStats} studies the distributions of related permutation statistics. Here the number of descents is the number of positions $i$ for which $w_i>w_{i+1}$. Thus an explicit probability formula is available for the extension, but our assumption of nonincreasing weights depending only on valleys no longer applies directly. One concrete approach is to track the effects of swapping adjacent remaining values on both descents and valleys, and determine when the resulting weight comparison preserves the original ordering and when additional history information is needed. This could both yield broader optimality theorems and explain why the present rule fails outside its range of validity.
\clearpage
\appendix
\section*{Appendix}
The main text determines an optimal strategy after unbiased shelf shuffling. Whether optimal choices are unique, whether the player must know the number of shelves, and how to analyze the number of correct guesses deserve separate consideration. The following three appendices explain what else the conditional probability ordering tells us and which aspects of reward analysis it leaves open.

Appendix~\ref{app:ties} examines ties between optimal choices. A four-card example shows that distinct candidates can have the same conditional probability even when permutation weights strictly decrease with the valley count. The swap proof also yields a necessary and sufficient condition for a strict comparison between adjacent candidates. Appendix~\ref{app:mixture} treats an unknown number of shelves. Nonincreasing valley weights remain nonincreasing under mixtures, so the same strategy is optimal for every prior distribution on the number of shelves, without first estimating it. These two conclusions describe, respectively, the flexibility of optimal choices and the strategy's independence from knowledge of the number of shelves.

Appendix~\ref{app:comparison} turns to reward distributions, comparing our setting with complete-feedback guessing in a uniformly shuffled deck with repeated card values. In the latter model, remaining multiplicities determine optimal choices; here the candidate ordering also uses the observed direction. Existing multiplicity models show that identifying an optimal strategy is only the beginning of reward analysis. Together with the equality conditions of Appendix~\ref{app:ties}, the final appendix suggests decomposing shelf-shuffling rewards according to guessing states, connecting this comparison to the fixed-shelf asymptotic question in the main text.
\section{Why optimal choices need not be unique}\label{app:ties}
Direction tracking provides an optimal choice without requiring it to be strictly better than every other choice. Four cards already suffice to exhibit both a strict comparison and a tie for the optimum. We compute these examples and then explain how the swap proof detects equality between adjacent candidates.

Take $n=4,m=2$. By~\eqref{eq:shelf-weight}, the probabilities of permutations with zero and one valley, respectively, are
\[
 q_{4,2}(0)=\frac1{128}[t](1+t)^3(1-t)^{-5}
 =\frac{3+5}{128}=\frac1{16},
 \qquad
 q_{4,2}(1)=\frac1{32}.
\]
After $(1,3)$ has been revealed, there are only two completions: $(1,3,4,2)$ has no valley, and $(1,3,2,4)$ has one. Thus
\[
 p_{(1,3)}(4)=\frac23,\qquad p_{(1,3)}(2)=\frac13.
\]
Direction tracking chooses $4$, and the comparison is strict.

After only $(2)$ has been revealed, the possibilities are
\[
\begin{array}{c|c|c}
\text{Next card}&\text{Completions}&\text{Total probability}\\\hline
1&(2,1,3,4),\ (2,1,4,3)&1/16\\
3&(2,3,1,4),\ (2,3,4,1)&3/32\\
4&(2,4,1,3),\ (2,4,3,1)&3/32
\end{array}
\]
The three weights sum to $1/4$, so
\[
 p_{(2)}(1)=\frac14,\qquad p_{(2)}(3)=p_{(2)}(4)=\frac38.
\]
The strategy $G$ chooses $3$. Changing its choice to $4$ at this history, and retaining $G$ at every other history, gives another optimal strategy. Guesses do not affect subsequent histories, and the conditional success probability in this round is unchanged, so the modification preserves the total expectation.

More generally, the swap also determines whether the comparison between adjacent remaining values is strict. Fix a history $h$ of positive probability, let $c$ be the preferred value and $d$ the other value as in Lemma~\ref{lem:swap}, and define
\[
 \mathcal E_h(c,d)=\{w\in S_n:(w_1,\ldots,w_k)=h,\ w_{k+1}=c,
 \ v(\tau w)=v(w)+1\}.
\]
This is the set of completions for which the swap adds exactly one valley. Removing the terms in~\eqref{eq:weight-difference} whose valley counts do not change gives
\begin{equation}\label{eq:strictness}
 M_h(c)-M_h(d)
 =\sum_{w\in\mathcal E_h(c,d)}\bigl(f(v(w))-f(v(w)+1)\bigr).
\end{equation}
Every summand is nonnegative, and $v(w)+1\le K$, so every weight is defined. Together with~\eqref{eq:conditional}, this shows that $p_h(c)>p_h(d)$ if and only if some $w\in\mathcal E_h(c,d)$ satisfies $f(v(w))>f(v(w)+1)$. If there is no such permutation, the two candidates tie. Thus, strict decrease of the weights with the valley count alone does not imply uniqueness of the optimal strategy: whether the swap actually adds a valley matters as well.
\section{The same strategy when the number of shelves is unknown}\label{app:mixture}
The number of shelves affects the conditional probability of the next card but does not appear in the direction-tracking rule. Thus a player who does not know how many shelves the machine uses can still follow the same optimal strategy. We now allow the number of shelves itself to be random and explain why it need not be estimated first.

Fix $n\ge1$. Let a positive integer $M$ be chosen at random before the shuffle, with $\pi_m=\mathbb P(M=m)$, where $\pi_m\ge0$ and $\sum_{m\ge1}\pi_m=1$. Conditional on $M=m$, independently perform one $m$-shelf shuffle according to Section~\ref{sec:model}. The player observes the actual cards but not $M$.

\begin{proposition}\label{prop:unknown-shelves}
For every distribution $(\pi_m)_{m\ge1}$ on the number of shelves, the direction-tracking strategy $G$ is optimal in this complete-feedback guessing problem.
\end{proposition}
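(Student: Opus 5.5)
The plan is to reduce Proposition~\ref{prop:unknown-shelves} to Theorem~\ref{thm:general} by showing that the law of the output permutation $W$, with $M$ averaged out, again has nonincreasing valley weights. Since the player never observes $M$ and the strategy class $\mathcal A_n$ only uses revealed cards and independent auxiliary randomness, the guessing problem depends only on this marginal law of $W$. So it suffices to identify that law.

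First, compute the marginal law of $W$. Conditioning on $M$ and using~\eqref{eq:shelf-weight}, for every $w\in S_n$,
\[
 \mathbb P(W=w)=\sum_{m\ge1}\pi_m\,q_{n,m}(v(w)).
\]
Define $f(v)=\sum_{m\ge1}\pi_m q_{n,m}(v)$ for $0\le v\le K$. Each term is a probability in $[0,1]$ and $\sum_m\pi_m=1$, so the series converges absolutely and $f(v)\in[0,1]$. Hence $\mathbb P(W=w)=f(v(w))$ exactly. In particular, $Z=\sum_{w}f(v(w))=\sum_m\pi_m\cdot 1=1>0$, where interchanging the finite sum over $S_n$ with the series is justified by nonnegativity.

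Second, verify monotonicity. By Lemma~\ref{lem:weight}, each sequence $q_{n,m}(0),\ldots,q_{n,m}(K)$ is nonincreasing. A nonnegative combination of nonincreasing sequences is nonincreasing, and this survives the countable sum because we may compare termwise: $f(v)-f(v+1)=\sum_m\pi_m\bigl(q_{n,m}(v)-q_{n,m}(v+1)\bigr)\ge0$, a convergent series of nonnegative terms.

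Finally, apply Theorem~\ref{thm:general} with this $f$. It gives that $G(h)$ maximizes the conditional probability of the next card after every history of positive probability under the mixed law, and that $\mathbb E C_G=\max_{A\in\mathcal A_n}\mathbb E C_A$.

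The only point needing care, which I expect to be the main (mild) obstacle, is justifying that the unobserved $M$ does not enlarge or alter the class of admissible strategies. The argument is that strategies are functions of $(H_k,U)$ with $U$ independent of $(M,W)$, so the proof of Theorem~\ref{thm:general} goes through verbatim using only the joint law of $(W,U)$, in which $W$ has law $f(v(\cdot))$. No analysis of the posterior of $M$ given the history is needed.
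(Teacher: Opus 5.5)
Your proposal is correct and follows the paper's route. Like the paper, you set $f(v)=\sum_{m\ge1}\pi_m q_{n,m}(v)$, get $\mathbb P(W=w)=f(v(w))$ by total probability, take monotonicity termwise from Lemma~\ref{lem:weight}, note that $Z=1$, and apply Theorem~\ref{thm:general}. The one detail the paper makes explicit and you pass over is why each $q_{n,m}(v)$ with $0\le v\le K$ is a probability in $[0,1]$: the paper exhibits a permutation with exactly $v$ valleys, and you could instead use $q_{n,m}(v)\le q_{n,m}(0)\le 1$, since $q_{n,m}(0)$ is the probability of the increasing permutation.
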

\begin{proof}
For every $v\in\{0,\ldots,K\}$, set
\[
 f(v)=\sum_{m\ge1}\pi_m q_{n,m}(v).
\]
Every admissible $v$ is realized by a permutation. For example, arrange the first $2v+1$ values as $(2,1,4,3,\ldots,2v,2v-1,2v+1)$ and append the remaining values in increasing order. This gives exactly $v$ valleys; for $v=0$, take the increasing permutation. Hence $q_{n,m}(v)$ is the probability of an individual permutation and lies between $0$ and $1$, so the series converges. Each $q_{n,m}$ is nonincreasing, making $f$ nonnegative and nonincreasing. By the law of total probability,
\[
 \mathbb P(W=w)=\sum_{m\ge1}\pi_m q_{n,m}(v(w))=f(v(w)).
\]
Summing over the finite set $S_n$ gives $\sum_w f(v(w))=1$. All hypotheses of Theorem~\ref{thm:general} hold, and the conclusion follows.
\end{proof}

The conditional distribution of the number of shelves will generally change after a history is observed; the proposition does not treat it as the original prior. Instead, it applies the general theorem to the unconditional output distribution, and that theorem already guarantees optimality after every history of positive probability. Taking $\pi_m$ concentrated at any one shelf count also shows that, for a fixed but unknown number of shelves, $G$ is optimal simultaneously in every model. This conclusion concerns a single shuffle after the number of shelves has been chosen.
\section{From remaining multiplicities to remaining order}\label{app:comparison}
Complete-feedback guessing in a uniformly shuffled deck provides a useful comparison. When card values may repeat, optimal choices are determined by remaining multiplicities. In our shelf model, cards are distinct, and the remaining values together with the most recently observed direction order the candidates. This difference also explains why the reward distribution requires further study even after an optimal strategy has been identified.

Consider a uniformly shuffled deck with $r$ card types, where type $i$ occurs $s_i$ times and the total number of cards is $N=\sum_{i=1}^r s_i$. When $t$ cards remain, let $R_i(t)$ be the remaining number of cards of type $i$. Conditional on the revealed history, the probability that the next card has type $i$ is $R_i(t)/t$. Guesses do not affect later revelations, so choosing a type of largest remaining multiplicity at every round is optimal. This is the complete-feedback model studied by Diaconis and Graham~\cite[Section 2]{DG}. If $S$ denotes the total number of correct guesses under this strategy, then
\[
 \mathbb E S=\sum_{t=1}^{N}\frac{\mathbb E\max_{1\le i\le r}R_i(t)}{t}.
\]
The formula follows by adding the conditional expectations over all rounds. When every card type occurs only once, the largest remaining multiplicity is always $1$, recovering our uniform benchmark $H_N$.

Even with such a simple optimal strategy, the reward asymptotics vary with the deck composition. He and Ottolini~\cite{HO} studied the optimal expectation when the maximum multiplicity stays bounded and the number of types grows, allowing different types to have different multiplicities. Ottolini and Steinerberger~\cite{OS} considered parameter ranges in which both the number of types and their multiplicities grow. For fixed multiplicity, Ottolini and Tripathi~\cite{OT} established a central limit theorem for the reward with an error bound for normal approximation. These results cannot be applied here merely by replacing multiplicity with the number of shelves: the conditional distribution in the uniform deck is determined by remaining counts, whereas shelf shuffling also retains order constraints.

Ties between optimal choices can also guide the analysis of the reward distribution. In the two-type model, when the remaining counts are equal, either choice has success probability $1/2$; when only one type remains, a correct guess is certain. Kuba and Panholzer~\cite{KPColors} used these distinct states to decompose the total reward and studied the resulting joint distributions and limits. Appendix~\ref{app:ties} shows that, in shelf shuffling, ties can also arise when two sets of completions have equal weights under the swap, a condition that cannot be recognized from remaining counts alone. Decomposing $C_G$ according to these equality cases and deriving reward recurrences could connect our conditional probability ordering to the study of fluctuations associated with Problem~\ref{prob:fixed}.
\clearpage
\begingroup
\raggedright

\endgroup
\end{document}